\theoremstyle{plain}
\newtheorem{theorem}{Theorem}[section]
\newtheorem{lemma}[theorem]{Lemma}
\theoremstyle{definition}
\theoremstyle{remark}
\numberwithin{equation}{section}
\newcommand{\N}{\mathbbm{N}}
\newcommand{\R}{\mathbbm{R}}
\newcommand{\Rplus}{\mathbbm{R}_{\geq 0}}
\newcommand{\p}{\mathbbm{P}}
\newcommand{\cA}{\mathcal{A}}
\newcommand{\cB}{\mathcal{B}}
\newcommand{\cG}{\mathcal{G}}
\newcommand{\cF}{\mathcal{F}}
\newcommand{\cN}{\mathcal{N}}
\newcommand{\expectation}[1]{\mathbbm{E}\left [ \, #1 \, \right ]}
\newcommand{\Var}{\mathrm{Var }}
\renewcommand{\epsilon}{\varepsilon}
\renewcommand{\phi}{\varphi}
\newcommand{\pspace}{(\Omega,\cA,\p)}
\newcommand{\intd}[1]{\,\mathrm{d}#1}
\newcommand{\norm}[1]{\left\lVert #1 \right\rVert}
\newcommand{\scalar}[2]{\left\langle #1,#2 \right\rangle}
\begin{document}

\author[Johannes T. N. Krebs]{Johannes T. N. Krebs}
\thanks{The author gratefully acknowledges the financial support of the Fraunhofer ITWM which is part of the Fraunhofer-Gesellschaft zur F\"{o}rderung der angewandten Forschung e.V}
\email{krebs@mathematik.uni-kl.de}
\address{University of Kaiserslautern, Erwin-Schr{\"o}dinger-Strasse, 67663 Kaiserslautern}
\title{Consistency and Asymptotic Normality of Stochastic Euler Schemes for Ordinary Differential Equations}
\keywords{Stochastic Euler schemes; ordinary differential equations; consistency; rate of convergence; asymptotic normality}
\date{December, 1 2015}
\subjclass[2010]{Primary: 60H10; secondary 34F05, 93E03}

\begin{abstract}
General stochastic Euler schemes for ordinary differential equations are studied. We give proofs on the consistency, the rate of convergence and the asymptotic normality of these procedures.
\end{abstract}

\maketitle

\section{Introduction}\label{Introduction}
We study the consistency and asymptotic normality of stochastic Euler schemes which are designed to approximate ordinary differential equations. Euler schemes are often used to simulate stochastic differential equations. \cite{fierro2001euler} study the consistency of these schemes in the context of It\^{o} stochastic differential equations. However, this idea can be used to approximate ordinary differential equations, too: \cite{fierro2007stochastic} consider a special kind of Euler approximation for a given ODE. In this paper, we generalize the idea: Let there be given the ODE system $\dot{x} = F(t, x)$, $x(0) = x_0\in\R^d$, $t\in[0,T]$, $0<T<\infty$. Then we approximate the solution $x$ on a partition $\pi^N$ of $[0,T]$ with a stochastic Euler scheme that is based on random variables $\tilde{F}^N_k$ instead on $F$. This approach can be useful in applications where one aims at approximating the trajectory of such a solution $x$ for a function $F$ which is costly to evaluate, for instance, in the case where $F$ is the sum of (finitely) many single functions $f_i$, $i\in I$, i.e. $F = \sum_{i\in I} f_i$. The paper is organized as follows: In Section~\ref{Preliminaries} we introduce the basic notions and regularity conditions of the model. In Section~\ref{ConsistencyAndRateOfConvergence} we give consistency results for our general Euler scheme. We state results on the asymptotic normality of the procedure in Section~\ref{AsymptoticNormality}. Appendix~\ref{Appendix} contains some background material.

\section{Preliminaries}\label{Preliminaries}
We denote for $p\ge 1$ by $\norm{\,\cdot\,}_{p}$ the $p$-norms on the $d$-dimensional Euclidean space. Let $T\in\R_+$ be a finite time horizon and let $F=(F_1,\ldots,F_d)': [0,T] \times \R^d \rightarrow \R^d$ be a continuous vector valued function. $F$ fulfills the following growth conditions w.r.t. the first and second coordinate for $s,t \in [0,T]$ and for $x,y\in\R^d$
\begin{align}
		&  \norm{ F(s,x) - F(t,x) }_1 \le K_1 \left(1+ \norm{x}_1 \right) |s-t|, \label{growthTime}\\
		& \norm{F(t,x)-F(t,y)}_1 \le K_2\, ||x-y||_1, \label{growthSpace}
		\end{align}
where $0<K_1,K_2 < \infty$ are some positive constants. Let there be given the ODE $\dot{x} = F(t,x)$ and $x(0) = x_0\in\R^d$ on $[0,T]$. Denote the unique global solution of this equation by $x: [0,T] \rightarrow \R^d,\, x(t) := x(0) + \int_0^t F\left(s, x(s) \right) \intd{s}$. This solution is guaranteed by the global Lipschitz condition \eqref{growthSpace} and is Lipschitz-continuous with a Lipschitz-constant $0<C<\infty$, i.e. $\norm{x(s)-x(t)}_1 \le C |s-t|$. Next, choose a sequence of partitions, $\pi^N$, $N \in \N_+$, of the interval $[0, T]$ such that $\pi^N$ consists of the points $t_0^N=0 < t_1^N < \ldots < t_{K_N}^N = T$ and such that the mesh of the partition $\Delta^N := \max_{1 \le k \le K_N} \Delta^N_k$ converges to zero as $N\rightarrow\infty$, where $\Delta^N_k := t^N_k - t^N_{k-1}$. The stochastic part is introduced via a probability space $\pspace$ endowed with the following mappings: For $N \in \N_+$ and $k=1,\ldots,K_N$ the function
\[
	\tilde{F}^N_k = (\tilde{F}^N_{k,1},\ldots, \tilde{F}^N_{k,d} )' : [0,T]\times \R^d \times \Omega \rightarrow \R^d \text{ is measurable}\left[\cB\left([0,T]\times\R^d\right) \otimes \cA \, \big|\, \cB\left(\R^d\right)\right]
\]
and is Lipschitz-continuous w.r.t. the second coordinate with the same Lipschitz constant as $F$.\\
Furthermore, for any selection of time-space coordinates $(t_1,y_1),\ldots,(t_{K_N},y_{K_N}) \in [0, T]\times\R^d$ the random variables $\tilde{F}^N_1(t_1,y_1), \ldots, \tilde{F}^N_{K_N}(t_{K_N},y_{K_N})$ are independent and each $\tilde{F}^N_k$ is an unbiased estimator of $F$ in the sense that $\expectation{ \tilde{F}^N_k (t, y) }= F(t,y) $ for $(t,y)\in [0,T]\times\R^d$. In addition, we assume that there exists a constant $0<K_3<\infty$ such that for each $t\in [0,T]$, $a=1,\ldots,d$, $k=1,\ldots,K_N$ and $N\in\N_+$ the variance of the approximation is bounded as	$\text{Var}\left[ \tilde{F}^N_{k,a} (t,x(t)) \right] \le K_3$. We generate for each $N\in\N_+$ a stochastic sequence $\hat{x}^N = \left\{ \hat{x}^N \left( t^N_{i} \right): i=0,\ldots, K_N\right\}$ according to the rule
\begin{align}\begin{split}\label{defXHat0}
			\hat{x}^N\left(t^N_0\right) := x(0) \in \R^d \text{ and }	\hat{x}^N \left(t^N_{i} \right)  := \hat{x}^N \left(t^N_{i-1} \right) + \Delta^N_{i} \tilde{F}^N_i \left(t^N_{i-1}, \hat{x}^N \left(t^N_{i-1} \right) \right) \text{ for } i = 1, \ldots , K_N.
\end{split}\end{align}
We pass from this sequence $\left\{ \hat{x}^N \left( t^N_{i} \right): i=0,\ldots, K_N \right\}$ to a right-continuous process which we denote again by $\hat{x}^N$, namely, we define
\begin{align}
		\hat{x}^N (t) := \hat{x}^N \left(t^N_i\right) \text{ for } t \in \left[ t^N_i, t^N_{i+1} \right) \text{ for } i=0,...,K_N -1 \text{ and } \hat{x}^N(T) = \hat{x}^N \left( t^N_{K_N} \right). \label{defXHat}
\end{align}
In the following, when speaking of $\hat{x}^N$, we shall always refer to this c\`{a}dl\`{a}g process. Moreover, $\left\{ \cF^N (\,\cdot\,): N \in \N_+\right\}$ is a sequence of filtrations on $\pspace$ such that for each $N\in\N_+$ the filtration $\cF^N(\,\cdot\,)$ is the natural and right-continuous filtration of the process $\hat{x}^N$ from equation \eqref{defXHat}.

\section{Consistency and Rate of Convergence}\label{ConsistencyAndRateOfConvergence}
We come to the first main result of this paper, this is the convergence in mean of the processes $\hat{x}^N$, $N\in \N_+$, namely
\begin{theorem}[$L^2$-convergence of $\hat{x}^N$ to $x$] \label{L2convergence}
Let the sequence of stochastic processes $(\hat{x}^N: N\in\N_+)$ be defined in equations \eqref{defXHat0} and \eqref{defXHat}. Let $x$ be the unique global solution to the ordinary differential equation. Then, there exists a constant $0<B<\infty$ such that
\[
		\norm{ \sup_{t\in [0,T] } \norm{  \hat{x}^N (t) - x (t ) }_1 }_{L^2(\p)} \le B \sqrt{\Delta^N }.
\]
\end{theorem}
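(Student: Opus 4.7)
The plan is a standard Euler stability analysis, with one technical twist coming from the fact that the variance bound is only stated at the true trajectory. Let $e^N_i := \hat x^N(t^N_i) - x(t^N_i)$. Writing $x(t^N_i) = x(t^N_{i-1}) + \int_{t^N_{i-1}}^{t^N_i} F(s, x(s))\intd{s}$ and subtracting the recursion in \eqref{defXHat0}, the one-step error splits as $e^N_i = e^N_{i-1} + M^N_i + L^N_i + D^N_i$, where
\[
M^N_i := \Delta^N_i\left[\tilde F^N_i(t^N_{i-1}, \hat x^N(t^N_{i-1})) - F(t^N_{i-1}, \hat x^N(t^N_{i-1}))\right]
\]
is the stochastic fluctuation, $L^N_i := \Delta^N_i\left[F(t^N_{i-1}, \hat x^N(t^N_{i-1})) - F(t^N_{i-1}, x(t^N_{i-1}))\right]$ is the Lipschitz contribution, and $D^N_i := \Delta^N_i F(t^N_{i-1}, x(t^N_{i-1})) - \int_{t^N_{i-1}}^{t^N_i} F(s,x(s))\intd{s}$ is the deterministic quadrature error. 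Telescoping yields $e^N_i = \sum_{j=1}^i (M^N_j + L^N_j + D^N_j)$.

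Now each piece is estimated. Using \eqref{growthTime}, \eqref{growthSpace} together with $x$ being $C$-Lipschitz (hence bounded) on $[0,T]$, one obtains $\norm{D^N_i}_1 \le A (\Delta^N_i)^2$ for some constant $A$, so $\sum_i \norm{D^N_i}_1 \le AT\Delta^N$. The Lipschitz hypothesis \eqref{growthSpace} gives pathwise $\norm{L^N_i}_1 \le K_2 \Delta^N_i \norm{e^N_{i-1}}_1$. For the stochastic term, the independence of $\tilde F^N_i$ from $\cF^N(t^N_{i-1})$ together with unbiasedness and the freezing lemma gives $\expectation{M^N_i \mid \cF^N(t^N_{i-1})} = 0$, so each coordinate of $S^N_i := \sum_{j=1}^i M^N_j$ is an $\cF^N$-martingale. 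To control the conditional variance of $M^N_i$ at the random point $\hat x^N(t^N_{i-1})$ from the assumption $\Var[\tilde F^N_{i,a}(t,x(t))] \le K_3$ (which is only available along the true trajectory), I add and subtract $\tilde F^N_{i,a}(t^N_{i-1}, x(t^N_{i-1})) - F_a(t^N_{i-1}, x(t^N_{i-1}))$ and invoke the pathwise Lipschitz property of both $\tilde F^N_i$ and $F$; this yields
\[
\expectation{(M^N_{i,a})^2 \mid \cF^N(t^N_{i-1})} \le (\Delta^N_i)^2\left(2K_3 + 8K_2^2 \norm{e^N_{i-1}}_1^2\right).
\]

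Combining these bounds via $\norm{v}_1^2 \le d \sum_a v_a^2$ and Doob's $L^2$ maximal inequality applied coordinatewise to $S^N$, the quantity $\phi^N_i := \expectation{\sup_{j \le i} \norm{e^N_j}_1^2}$ satisfies a recursion of the form
\[
\phi^N_i \le C_1 \Delta^N + C_2 \sum_{j=1}^i \Delta^N_j\, \phi^N_{j-1}, \qquad i = 1, \ldots, K_N,
\]
with constants $C_1, C_2$ depending only on $K_1, K_2, K_3, T, C$ and $d$. A discrete Gronwall lemma then forces $\phi^N_{K_N} \le C_1 e^{C_2 T}\Delta^N$. Passing from the grid to the continuous supremum is immediate: since $\hat x^N$ is piecewise constant and $x$ is $C$-Lipschitz, for $t \in [t^N_i, t^N_{i+1})$ one has $\norm{\hat x^N(t) - x(t)}_1 \le \norm{e^N_i}_1 + C\Delta^N$, so $\sup_{t\in[0,T]} \norm{\hat x^N(t) - x(t)}_1 \le \max_i \norm{e^N_i}_1 + C\Delta^N$, and taking the $L^2(\p)$-norm produces the claimed bound $B\sqrt{\Delta^N}$. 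The main obstacle I anticipate is the variance-transfer step: it is what introduces the $\norm{e^N_{i-1}}_1^2$ term into the martingale variance bound and therefore what couples the martingale estimate with the Lipschitz one, keeping the Gronwall recursion genuinely closed rather than degenerating.
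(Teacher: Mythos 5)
Your argument is correct and reaches the stated rate, but it is organized differently from the paper's proof, and the comparison is worth recording. The paper splits the one-step error so that the noise is always centered along the \emph{true} trajectory: it keeps $\{\tilde F^N_j(t^N_{j-1},\hat x^N_{j-1})-\tilde F^N_j(t^N_{j-1},x^N_{j-1})\}\Delta^N_j$ as a separate stochastic Lipschitz term and works with the sums $\sum_j\{\tilde F^N_j(t^N_{j-1},x^N_{j-1})-F(t^N_{j-1},x^N_{j-1})\}\Delta^N_j$; it then applies the discrete Gronwall inequality \emph{pathwise} (inequality \eqref{preGronwall}), and only afterwards takes second moments, using Doob's $L^2$-inequality and the independence of the evaluations at the deterministic points $x(t^N_{j-1})$ to compute the variance of the sum directly. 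This design means the unbiasedness and variance hypotheses are invoked exactly where they are stated, and no conditioning at random space points is ever needed. You instead center the martingale increments $M^N_i$ at the random Euler iterate $\hat x^N(t^N_{i-1})$ — which has the aesthetic advantage that $S^N$ is precisely the adapted martingale part of $\hat x^N$ with respect to $\cF^N(\cdot)$ — at the price of two extra ingredients: the freezing/conditioning step $\expectation{M^N_i\mid\cF^N(t^N_{i-1})}=0$, and the variance-transfer estimate $\expectation{(M^N_{i,a})^2\mid\cF^N(t^N_{i-1})}\le(\Delta^N_i)^2(2K_3+8K_2^2\norm{e^N_{i-1}}_1^2)$, whose constants do check out via the Lipschitz property of both $\tilde F^N_i$ and $F$. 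You then run Gronwall on the expected suprema $\phi^N_i$ rather than pathwise; with Cauchy--Schwarz on the Lipschitz sum and Doob applied coordinatewise this closes correctly (one should note, for completeness, that $\phi^N_i<\infty$ follows inductively from the variance bound at $x(t^N_{i-1})$ and the Lipschitz property, so the recursion is meaningful). The only caveat is that your freezing step uses independence of the entire random field $\tilde F^N_i(\cdot,\cdot)$ from $\cF^N(t^N_{i-1})$, whereas the hypothesis is literally stated only for evaluations at fixed time-space coordinates; this is a slightly stronger reading than what the paper's variance computation \eqref{IndependenceOfApproximations} needs, though the paper's own submartingale claim with respect to $\cF^N(\,\cdot\,)$ already relies on essentially the same interpretation, so I would not count it as a gap — merely state the field-level independence explicitly if you write the argument out.
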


\begin{proof}
Throughout the proof we shall write $\norm{\,\cdot\,}$ for the Euclidean 1-norm on $\R^d$. Furthermore, we set $x^N_k := x^N\left(t^N_k\right)$ and $\hat{x}^N_k := \hat{x}^N \left(t^N_k\right)$, for $k=0,\ldots,K_N$. First, we consider $\hat{x}^N$ at the points $t^N_k$, $k=1,\ldots,K_n$. We derive for the difference $\hat{x}^N_k - x^N_k$ at each $k = 1,\ldots,K_N$ the equation
\begin{align*}
	\hat{x}^N_k - x^N_k  &= \hat{x}^N_{k-1} - x^N_{k-1} + \left\{ \tilde{F}^N_k\left(t^N_{k-1},\hat{x}^N_{k-1} \right) - \tilde{F}^N_k\left(t^N_{k-1},x^N_{k-1}\right) \right\} \Delta^N_k + \left\{ \tilde{F}^N_k\left(t^N_{k-1},x^N_{k-1}\right) - {F}\left(t^N_{k-1},x^N_{k-1}\right) \right\} \Delta^N_k \\
	&\quad+ \left\{ F\left(t^N_{k-1}, x^N_{k-1} \right) \Delta^N_k - \int_{\Delta^N_k} F\left(s, x(s) \right) \intd{s} \right\}.
\end{align*}
Successive iteration down to $t^N_0$ yields
\begin{align}\begin{split}\label{succItConXhat}
	\hat{x}^N_k - x^N_k  &= \sum_{j=1}^{k} \left\{ \tilde{F}^N_j\left(t^N_{j-1}, \hat{x}^N_{j-1}\right) - \tilde{F}^N_j\left(t^N_{j-1}, x^N_{j-1}\right) \right\} \Delta^N_j + \sum_{j=1}^k \left\{ \tilde{F}^N_j\left(t^N_{j-1}, x^N_{j-1}\right) - F\left(t^N_{j-1}, x^N_{j-1}\right) \right\} \Delta_j^N
\\
	&\quad+  \sum_{j=1}^k \left\{ F\left(t^N_{j-1}, x^N_{j-1}\right) \Delta^N_j - \int_{ \Delta_j^N} F(s,x(s)) \intd{s} \right\}.
\end{split}\end{align}
By the growth condition w.r.t. the time coordinate and the Lipschitz condition w.r.t. the space coordinate, we have for the last term in \eqref{succItConXhat}
\begin{align*}
		&\norm{ \int_{\Delta^N_j} F\left( t^N_{j-1}, x^N_{j-1} \right) - F(s,x(s)) \,\intd{s}	}
		\le \left\{ K_1 \left( 1+ \sup_{t \in [0,T]} \norm{x(t)} \right) + K_2 C \right\} \left(\Delta^N_j \right)^2.
\end{align*}
We put for short $L := K_1 \left( 1+ \sup_{t \in [0,T]} \norm{x(t)} \right) +K_2 C $. We can estimate the left-hand side of \eqref{succItConXhat} using the Lipschitz condition on the stochastic approximations $\tilde{F}^N_k$ of $F$ to arrive at the following bound for $\norm{ \hat{x}^N_k - x^N_k }$:
\begin{align}\label{preGronwall}
	\norm{ \hat{x}^N_k - x^N_k } \le K_2 \sum_{j=1}^k \norm{ \hat{x}^N_{j-1} - x^N_{j-1} } \Delta^N_j + \norm{ \sum_{j=1}^k \left( \tilde{F}^N_j \left(t^N_{j-1}, x ^N_{j-1} \right) - F\left( t^N_{j-1}, x^N_{j-1} \right) \right) \Delta_j^N		} + \sum_{j=1}^k L \left(\Delta^N_j \right)^2.
\end{align}
We apply the discrete Gronwall inequality from Lemma~\ref{discreteGronwallInequality} from Appendix~\ref{Appendix} to the bound given in \eqref{preGronwall}. We get
\begin{align}
\norm{ \hat{x}^N_k - x^N_k } &\le \norm{ \sum_{j=1}^k \left( \tilde{F}^N_j \left(t^N_{j-1}, x _{j-1}\right) - F\left(t^N_{j-1}, x^N_{j-1}\right) \right) \Delta^N_j } + \sum_{j=1}^k L \left( \Delta^N_j \right) ^2 \nonumber \\
&\quad + \sum_{j=1} ^{k-1} \left\{ \norm{ \sum_{i=1}^j \left( \tilde{F}^N_i\left(t^N_{i-1}, x^N_{i-1} \right) - F\left(t^N_{i-1}, x^N_{i-1} \right) \right) \Delta^N_i } + L \sum_{i=1}^j \left( \Delta^N_i \right) ^2 \right\} \left\{ K_2 \Delta^N_j \right\} \exp\left( \sum_{i=j+1}^{k-1} K_2 \Delta^N_i \right) \nonumber \\
\begin{split} \label{preDoob3}
	&\le	M_1 \Delta^N + \norm{ \sum_{j=1}^k \left( \tilde{F}^N_j \left(t^N_{j-1}, x^N_{j-1}\right) - F\left(t^N_{j-1}, x^N_{j-1}\right) \right) \Delta^N_j } \\
	&\quad + M_2  \left\{ \sum_{j=1}^{k-1} \norm{ \sum_{i=1}^j \left( \tilde{F}^N_i \left(t^N_{i-1}, x^N_{i-1} \right) - F\left(t^N_{i-1}, x^N_{i-1} \right) \right) \Delta^N_i }\Delta^N_j \right\}, \end{split}
\end{align}
where the constants $M_1$ and $M_2$ are given by $M_1 := L T + K_2 L T^2 \exp( K_2 T)$ and $M_2 := K_2 \exp(K_2  T )$. In particular, for $A:= 2M_1^2$ and $B:=2( 1+M_2 T)^2$ it holds good that
\begin{align}\label{preDoob}
		\sup_{1 \le j \le K_N} \norm{  \hat{x}^N_j - x^N_j }^2 &\le A\left(\Delta^N\right)^2 + B \sup_{1 \le j \le K_N} \norm{ \sum_{i=1}^j \left(\tilde{F}^N_i \left(t^N_{i-1}, x^N_{i-1} \right) - F\left(t^N_{i-1}, x^N_{i-1} \right) \right)	\Delta^N_i		}^2.
\end{align}
Next, we use the independence assumptions on the $\tilde{F}^N_k$ and the assumption that in each point $\expectation{ \tilde{F}^N_k(t,x)} = F(t,x) $. We show that the discrete process
\begin{align*}
	\left\{t^N_0,\ldots,t^N_{K_N} \right\} \ni t^N_j \mapsto \norm{ \sum_{i=1}^{K_n} 1_{ \left\{ t^N_i \le t^N_j \right\} } \left(\tilde{F}^N_i \left(t^N_{i-1}, x\left(t^N_{i-1}\right)\right) - F\left(t^N_{i-1}, x\left(t^N_{i-1}\right)\right) \right)	\Delta^N_i }
\end{align*}
constitutes a submartingale[$\cF^N(\,\cdot\,)$]. Indeed, we have for any two $1 \le j \le k \le K_N$
\begin{align*}
	&\expectation{ \norm{ \sum_{i=1}^k \left(\tilde{F}^N_i \left(t^N_{i-1}, x^N_{i-1} \right) - F\left(t^N_{i-1}, x^N_{i-1}\right) \right)	\Delta^N_i} \Bigg|\, \cF^N \left(t^N_j \right) } = \expectation{ \sum_{a=1}^d \left| \sum_{i=1}^k \left(\tilde{F}^N_{i,a} \left(t^N_{i-1}, x^N_{i-1}\right) - F_a \left(t^N_{i-1}, x^N_{i-1}\right) \right)\Delta_i^N \right| \Bigg|\, \cF^N\left(t^N_j \right)}.
\end{align*}
Due to the independence assumption on the stochastic family $\tilde{F}_i^N$, the 1-dimensional processes
\[
	\left\{t^N_0,...,t^N_{K_N} \right\}\ni t^N_j \mapsto \left| \sum_{i=1}^{K_N} 1_{\left \{ t^N_i \le t^N_j \right\} } \left(\tilde{F}^N_{i,a} \left(t^N_{i-1}, x\left(t^N_{i-1}\right)\right) - F_a \left(t^N_{i-1}, x\left(t^N_{i-1}\right)\right) \right) \Delta^N_i \right|
\]
are submartingales[$\cF^N(\,\cdot\,)$] for each $a = 1,\ldots,d$. Summation over the index $a$ proves the statement about the submartingale property. This puts us in position to use Doob's $L^p$-Inequality for equation \eqref{preDoob} with $p=2$ applied to the above submartingale
\begin{align}
		\expectation{\sup_{1\le k \le K_N} \norm{  \hat{x}^N_k - x^N_k }^2} 
		&\le A (\Delta^N)^2 + B \,\expectation{  \sup_{1\le k \le K_N} \left( \norm{ \sum_{i=1}^k \left(\tilde{F}^N_i \left(t^N_{i-1}, x^N_{i-1} \right) - F\left(t^N_{i-1}, x^N_{i-1} \right) \right)	\Delta^N_i		} \right)^2 } \label{preDoob2} \\
		&\le A (\Delta^N)^2 + 4 B \, \expectation{  \left( \, \norm{ \sum_{i=1}^{K_N} \left(\tilde{F}^N_i \left(t^N_{i-1}, x^N_{i-1} \right) - F\left(t^N_{i-1}, x^N_{i-1} \right) \right)	\Delta^N_i	}	\right) ^2 } \label{Doob} \\
				&\le A (\Delta^N)^2 + 4 B d \sum_{a=1}^d \expectation{ \left|\sum_{i=1}^{K_N} \left( \tilde{F}^N_{i,a}\left(t^N_{i-1}, x^N_{i-1} \right) - F_a \left(t^N_{i-1}, x^N_{i-1} \right)	\right) \Delta_i^N	 \right|^2 } \nonumber \\
	&=  A (\Delta^N)^2 + 4 B d \sum_{a=1}^d \sum_{i=1}^{K_N} \left(\Delta^N_i \right)^2 \Var\left[ \tilde{F}^N_{i,a} \left(t^N_{i-1}, x^N_{i-1} \right) \right]\label{IndependenceOfApproximations} \\
		&\le A (\Delta^N)^2 + 4 B T d^2 K_3 \Delta^N. \label{uniformBoundVar}
\end{align}
The first inequality \eqref{preDoob2} follows immediately from inequality \eqref{preDoob}. Inequality \eqref{Doob} stems from Doob's $L^p$-inequality. Equality \eqref{IndependenceOfApproximations} follows from the indepence of the random variables $\tilde{F}^N_{1,a}\left(t^N_{0},	x\left( t^N_{0} \right) \right),\ldots, \tilde{F}^N_{K_N,a}\left(t^N_{K_N-1},	x\left( t^N_{K_N-1} \right) \right)$. The last inequality \eqref{uniformBoundVar} follows from the condition that the variance of the approximation is uniformly bounded. We are now in position to consider the processes $\hat{x}^N$ over the entire interval $[0,T]$. Remember that $\hat{x}^N (t) = \hat{x}^N \left( t^N_{k-1} \right) $ for $t\in \left[t^N_{k-1}, t^N_k \right)$ and $\hat{x}^N(T) = \hat{x}^N \left( t^N_N \right)$, thus,
\begin{align}
		\sup_{t \in [0,T] } \norm{	\hat{x}^N(t) - x(t) }^2 \le 2\left\{	\sup_{1 \le k \le K_N} \norm{ \hat{x}^N \left( t^N_k \right) - x\left( t^N_k \right) }^2 + C^2 \left(\Delta^N \right)^2	\right\}. \label{preL2convergence}
\end{align}
All in all, we find that $\norm{ \sup_{t\in [0,T]} \norm{  \hat{x}^N(t) - x(t) }\, }_{L^2(\p)} \le const\, \left(\Delta^N\right)^{\frac{1}{2}}$ for a sequence of partitions having a mesh $\Delta^N$ which converges to zero. This finishes the proof.
\end{proof}

In addition to the $L^2(\p)$-convergence of the process $\hat{x}^N$, we can state another result on the pathwise convergence for a special choice of the partitioning sequence $\left\{\pi^N:N=1,\ldots,\infty \right\}$. It is an application of Kolmogorov's maximal inequality and follows immediately from the inequality from equation \eqref{preDoob}. We have the following theorem

\begin{theorem}[$a.s.$-convergence of $\hat{x}^N$]\label{preAlmostSureConvergence}
Let $\left\{\pi^N: N=1,\ldots,\infty \right\}$ be a partitioning sequence of the interval $[0, T]$ such that $\sum_{N=1}^{\infty} \Delta^N < \infty$. Then $\sup_{t\in[0,T]} \norm{ \hat{x}^N(t) - x(t) }_1$ converges to zero almost surely.
\end{theorem}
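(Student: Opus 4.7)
The plan is to upgrade the $L^2$-estimate from the previous proof to a probability tail bound on the stochastic partial sums, and then invoke Borel--Cantelli. Inequality \eqref{preDoob} already furnishes the purely pathwise bound
\[
\sup_{1 \le k \le K_N} \norm{\hat{x}^N_k - x^N_k}_1^2 \le A (\Delta^N)^2 + B \sup_{1 \le k \le K_N} \norm{S^N_k}_1^2,
\]
where $S^N_k := \sum_{i=1}^k \bigl(\tilde{F}^N_i(t^N_{i-1}, x^N_{i-1}) - F(t^N_{i-1}, x^N_{i-1})\bigr) \Delta^N_i$ is a partial sum of independent, mean-zero $\R^d$-valued random vectors, the independence coming from the fact that the arguments $(t^N_{i-1}, x^N_{i-1})$ are deterministic.

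The central step is to control $\sup_k \norm{S^N_k}_1$ via Kolmogorov's maximal inequality. Apply it coordinate-wise to the scalar centered random walks $k \mapsto S^N_{k,a}$; using $\Var[\tilde{F}^N_{k,a}(t^N_{k-1}, x^N_{k-1})] \le K_3$ and $\sum_{i=1}^{K_N} (\Delta^N_i)^2 \le T \Delta^N$ one gets, for every $\epsilon > 0$ and each coordinate $a$,
\[
\p \Bigl( \sup_{1 \le k \le K_N} |S^N_{k,a}| \ge \epsilon \Bigr) \le \frac{K_3 T}{\epsilon^2}\, \Delta^N.
\]
Since $\norm{S^N_k}_1 = \sum_{a=1}^d |S^N_{k,a}|$ and $\sup_k \sum_a |S^N_{k,a}| \le \sum_a \sup_k |S^N_{k,a}|$, a union bound yields
\[
\p \Bigl( \sup_{1 \le k \le K_N} \norm{S^N_k}_1 \ge \epsilon \Bigr) \le \frac{d^3 K_3 T}{\epsilon^2}\, \Delta^N.
\]

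For fixed $\epsilon>0$, choose $N_0$ so large that $A(\Delta^N)^2 \le \epsilon^2/2$ for all $N \ge N_0$; this is possible because $\Delta^N \to 0$ under the summability assumption. Combining the display above with \eqref{preDoob} then gives
\[
\p \Bigl( \sup_{1 \le k \le K_N} \norm{\hat{x}^N_k - x^N_k}_1 \ge \epsilon \Bigr) \le \p \Bigl( \sup_k \norm{S^N_k}_1 \ge \epsilon / \sqrt{2B} \Bigr) \le C_\epsilon\, \Delta^N
\]
for $N \ge N_0$ and a constant $C_\epsilon$ depending only on $\epsilon$, $A$, $B$, $d$, $K_3$, $T$. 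The hypothesis $\sum_N \Delta^N < \infty$ and the Borel--Cantelli lemma now force $\sup_k \norm{\hat{x}^N_k - x^N_k}_1 \to 0$ almost surely. To pass from the grid to the whole interval, the argument leading to \eqref{preL2convergence} (piecewise constancy of $\hat{x}^N$ and Lipschitz continuity of $x$) gives
\[
\sup_{t \in [0,T]} \norm{\hat{x}^N(t) - x(t)}_1 \le \sup_{1 \le k \le K_N} \norm{\hat{x}^N_k - x^N_k}_1 + C\, \Delta^N,
\]
and both summands vanish almost surely. The only mildly delicate point is the reduction from the $\R^d$-valued maximal inequality to its scalar version, which is handled cleanly by the coordinate-wise union bound; everything else is routine.
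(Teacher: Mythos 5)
Your proposal is correct and follows essentially the same route as the paper: the pathwise bound \eqref{preDoob}, Kolmogorov's maximal inequality applied coordinate-wise to the partial sums $S^N_k$ (whose summands are independent because they are evaluated at the deterministic points $(t^N_{i-1},x(t^N_{i-1}))$), the first Borel--Cantelli lemma under $\sum_N \Delta^N<\infty$, and the passage from the grid to $[0,T]$ via piecewise constancy of $\hat{x}^N$ and Lipschitz continuity of $x$. The only difference is cosmetic: you aggregate the coordinates by a union bound and absorb the deterministic term $A(\Delta^N)^2$ via a threshold $N_0$ before invoking Borel--Cantelli, while the paper concludes almost sure convergence coordinate-wise and then sums.
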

\begin{proof}
Write again $x^N_k := x\left(t^N_k\right)$ and $\hat{x}^N_k := \hat{x}^N\left(t^N_k\right)$ for $k=0,\ldots,K_N$. Consider equation~\eqref{preDoob}, the maximum on the right-hand side can be bounded as
\begin{align*}
\max_{1 \le j \le K_N} \norm{ \sum_{i=1}^j \left( \tilde{F}^N_i \left(t^N_{i-1}, x^N_{i-1} \right)	- F \left(t^N_{i-1}, x^N_{i-1} \right)		\right) \Delta^N_i }_1 \le  \sum_{a=1}^d \left( \max_{1 \le j \le K_N} \left| \sum_{i=1}^j \left( \tilde{F}^N_{i,a} \left(t^N_{i-1}, x^N_{i-1} \right)	- F_a \left(t^N_{i-1}, x^N_{i-1} \right)		\right) \Delta^N_i \right| \right).
\end{align*}
We show that $\max_{1 \le j \le K_N} \left| \sum_{i=1}^j \left( \tilde{F}^N_{i,a} \left(t^N_{i-1}, x^N_{i-1} \right)	- F_a \left(t^N_{i-1}, x^N_{i-1} \right)		\right) \Delta^N_i \right| \rightarrow 0 $ almost surely for each coordinate $a=1,\ldots,d$. An application of Kolmogorov's maximal inequality yields for $\epsilon >0$ that
\[
		\p \left( \max_{1 \le j \le K_N} \left| \sum_{i=1}^j \left( \tilde{F}^N_{i,a} \left(t^N_{i-1}, x^N_{i-1} \right)	- F_a \left(t^N_{i-1}, x^N_{i-1} \right)		\right) \Delta^N_i \right| 	> \epsilon	\right) \le \frac{1}{\epsilon^2} \sum_{i=1}^{K_N} \Var \left( \tilde{F}^N_{i,a} \left(t^N_{i-1}, x^N_{i-1} \right) \Delta_i^N \right) \le \frac{K_3T}{\epsilon^2} \Delta^N.
\]
Hence, we conclude the $a.s.$-convergence from the first Borel-Cantelli Lemma by the convergence assumption on the meshes of partitioning sequence $\left\{\pi^N: N=1,\ldots,\infty \right\}$. The conclusion follows immediately by combining inequality \eqref{preDoob} and \eqref{preL2convergence}, as well as the fact that almost sure convergence is unaffected by continuous transformations.
\end{proof}

\section{Asymptotic Normality of Stochastic Approximation Procedures}\label{AsymptoticNormality}
In this section we prove the asymptotic normality of the stochastic Euler schemes for ODE approximations

\begin{theorem}
Let $\{\pi^N: N\in\N_+ \}$ be the sequence of dyadic partitions of $[0,T]$, i.e. $\pi^N = \{ Tk / 2^N: k=0,1,\ldots,2^N \}$. Let $F=(F_1,\ldots,F_d)$ fulfill the regularity conditions from \eqref{growthTime} and \eqref{growthSpace}. Additionally, let each component of $F$ be continuously differentiable w.r.t. the space coordinate, i.e. $(t,x) \mapsto \nabla_x F_i(t,x)$ is continuous for $i=1,\ldots,d$.\\
Furthermore, let the stochastic approximations $\tilde{F}^N_k$ be regular in that for all $N\in \N_+$ and $k=1,\ldots,2^N$ the $\tilde{F}^N_k$ are independent copies of $\tilde{F}$ where the time-space process $\tilde{F}: [0,T]\times \R^d \times \Omega \rightarrow \R^d$ is Lipschitz-continuous in the space coordinate with the Lipschitz constant $K_2$ as well as continuous in the time coordinate and fulfills the integrability condition
\[ \expectation{ \sup_{t\in [0,T]} \norm{\tilde{F}(t,x(t)) \cdot \tilde{F}(t,x(t))' }_1^2 } < \infty.\]
Then for each $t \in [0,T]$ in the limit	$\lim_{N\rightarrow\infty} \left( \Delta^N \right)^{-\frac{1}{2}} \left( \hat{x}^N (t) - x(t) \right) \sim \mathcal{N} (0, \Sigma(t) )$, where	the function $\Sigma: [0,T]\rightarrow \R^{d\times d}$ is defined as
\[
	\Sigma(t) = \int_0^t P(s,t) \expectation{ (\tilde{F}(s,x(s)) - F(s,x(s))) \cdot (\tilde{F}(s,x(s)) - F(s,x(s)))'} P(s,t)' \,\intd{s}
	\]
and $P$ is the uniform limit of the function $P^N$ on $[0,T]^2$ given by $[0,T]^2 \ni P^N(s,t) = \prod_{ s < t^N_j \le t} \left(I + \Delta^N \nabla_x F \left(t^N_{j-1}, x\left( t^N_{j-1} \right) \right) \right)$.
\end{theorem}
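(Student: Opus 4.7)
The plan is to linearise the error recursion around the true trajectory $x$, iterate it into a conditionally centred sum, and invoke a multivariate martingale central limit theorem after rescaling by $(\Delta^N)^{-1/2}$.

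Writing $\eta^N_k := \hat x^N_k - x^N_k$, I would decompose the one-step increment into three parts: the martingale noise
\[
\xi^N_k \;:=\; \tilde F^N_k(t^N_{k-1},\hat x^N_{k-1}) - F(t^N_{k-1},\hat x^N_{k-1}),
\]
which is centred given $\cF^N(t^N_{k-1})$ because $\tilde F^N_k$ is independent of $\cF^N(t^N_{k-1})$ and $\expectation{\tilde F^N_k(t,y)}=F(t,y)$; the term $F(t^N_{k-1},\hat x^N_{k-1})-F(t^N_{k-1},x^N_{k-1})$, which I Taylor-expand as $\nabla_x F(t^N_{k-1},x^N_{k-1})\eta^N_{k-1}+r^N_k$ using the new $C^1$-hypothesis on $F$; and the deterministic one-step quadrature error $R^N_k := \Delta^N F(t^N_{k-1},x^N_{k-1}) - \int_{t^N_{k-1}}^{t^N_k}F(s,x(s))\intd s$, of size $O((\Delta^N)^2)$. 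Setting $A^N_k := I + \Delta^N \nabla_x F(t^N_{k-1},x^N_{k-1})$, the recursion reads $\eta^N_k = A^N_k\eta^N_{k-1} + \Delta^N \xi^N_k + \Delta^N r^N_k + R^N_k$ and iterates (since $\eta^N_0=0$) to
\[
\eta^N_k \;=\; \sum_{j=1}^k P^N(t^N_j,t^N_k)\,\Delta^N\xi^N_j \;+\; \sum_{j=1}^k P^N(t^N_j,t^N_k)\bigl(\Delta^N r^N_j + R^N_j\bigr),
\]
where $P^N(t^N_j,t^N_k)=\prod_{i=j+1}^k A^N_i$ matches the definition in the statement.

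Next I would show both remainder sums are $o_{L^1}((\Delta^N)^{1/2})$. From $\|\nabla_x F\|\le K_2$ the product bound $\|\prod_i(I+\Delta^N B_i)\|\le \exp(\sum_i \Delta^N\|B_i\|)$ gives $\|P^N(s,t)\|\le \exp(K_2 T)$ uniformly in $(s,t)\in[0,T]^2$, so the deterministic contribution collapses to $O(T\Delta^N)$. For the Taylor remainder, uniform continuity of $\nabla_x F$ on a compact tube around $x([0,T])$ produces, for any $\epsilon>0$, a $\delta(\epsilon)>0$ with $\|r^N_k\|\le \epsilon\|\eta^N_{k-1}\|$ on $\{\sup_k\|\eta^N_k\|\le\delta\}$, an event of probability $1-o(1)$ by the bound $\norm{\sup_k\norm{\eta^N_k}_1}_{L^2(\p)}\le B(\Delta^N)^{1/2}$ from Theorem~\ref{L2convergence}; a Cauchy--Schwarz estimate then yields a contribution of size $\epsilon\cdot O((\Delta^N)^{1/2})$, which is arbitrarily small. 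Choosing $k_N(t)$ with $t^N_{k_N(t)}\to t$, after rescaling by $(\Delta^N)^{-1/2}$ only the martingale sum
\[
W^N(t) \;:=\; \sum_{j=1}^{k_N(t)} P^N(t^N_j,t)\sqrt{\Delta^N}\,\xi^N_j
\]
survives, and the step-function correction $(\Delta^N)^{-1/2}[\hat x^N(t)-\hat x^N_{k_N(t)}] + (\Delta^N)^{-1/2}[x^N_{k_N(t)}-x(t)]=O((\Delta^N)^{1/2})\to 0$ reduces the claim to $W^N(t)\dconv\cN(0,\Sigma(t))$. For the martingale CLT I would verify (i) the conditional covariance $\sum_{j=1}^{k_N(t)}\Delta^N\, P^N(t^N_j,t)\, V(t^N_{j-1},\hat x^N_{j-1})\, P^N(t^N_j,t)^\top$ with $V(s,y):=\Var[\tilde F(s,y)]$ converges in probability to $\Sigma(t)$ --- a Riemann-sum argument using the assumed uniform convergence $P^N\to P$, continuity of $V$ in $y$ (from Lipschitz continuity of $\tilde F$ in space plus dominated convergence against the envelope $\expectation{\sup_t\norm{\tilde F\tilde F^\top}_1^2}<\infty$), and the consistency $\hat x^N\to x$ from Theorem~\ref{preAlmostSureConvergence}; and (ii) a Lyapunov condition $\sum_j\expectation{\norm{P^N(t^N_j,t)\sqrt{\Delta^N}\xi^N_j}_1^4}=O(\Delta^N)\to 0$, which follows from the same fourth-moment envelope together with $\|P^N\|\le\exp(K_2 T)$.

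The main obstacle I foresee is the control of the linearisation remainder: the argument requires $\sum_j P^N\Delta^N r^N_j$ to be \emph{strictly} $o_P((\Delta^N)^{1/2})$, not merely $O_P((\Delta^N)^{1/2})$, for otherwise its presence would bias the limiting covariance. This is where continuous differentiability of $F$ (as opposed to the Lipschitz assumption used in Section~\ref{ConsistencyAndRateOfConvergence}) becomes essential and combines with the sharp $\sqrt{\Delta^N}$-rate from Theorem~\ref{L2convergence}; absent either ingredient the approach breaks down. A close second in difficulty is the convergence of the conditional covariance to $\Sigma(t)$, which has to accommodate three simultaneous approximations ($P^N\to P$ uniformly, $\hat x^N\to x$, and discrete Riemann sum to integral) while preserving the matrix structure.
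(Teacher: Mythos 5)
Your decomposition and remainder analysis share the paper's skeleton (linearise around $x$, iterate to obtain the transition products $P^N$, kill the Taylor and quadrature remainders by combining the $\sqrt{\Delta^N}$-rate of Theorem~\ref{L2convergence} with uniform continuity of $\nabla_x F$ on a tube around the trajectory), but your treatment of the leading noise term is genuinely different: you keep the noise $\xi^N_k=\tilde{F}^N_k(t^N_{k-1},\hat{x}^N_{k-1})-F(t^N_{k-1},\hat{x}^N_{k-1})$ at the \emph{estimated} point, so that the scaled summands form a martingale difference array, and you invoke a martingale CLT (conditional covariance convergence plus a Lyapunov condition); note this needs the field-level independence of the $\tilde{F}^N_k$ from the past, which the theorem's ``independent copies of $\tilde{F}$'' hypothesis does supply. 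The paper instead centres the noise at the \emph{true} trajectory, $m^N_k=\tilde{F}^N_k(t^N_{k-1},x^N_{k-1})-F(t^N_{k-1},x^N_{k-1})$, which makes the summands independent; it then compares characteristic functions with Gaussian surrogates via Lemma~\ref{remainderCharacteristicFunctions} and a Gaussian tail bound, and pushes the difference between noise at $\hat{x}$ and at $x$ into an extra remainder $R^{N,k}_1$ that vanishes in $L^2$ by Theorem~\ref{L2convergence}. Your route avoids the surrogate Gaussians, at the price of random conditional covariances; the paper's route only ever needs moments of $\tilde{F}$ along the deterministic path.

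That price is where your sketch has a genuine gap: the Lyapunov step. You assert $\sum_j \expectation{\norm{P^N(t^N_j,t)\sqrt{\Delta^N}\,\xi^N_j}^4}=O(\Delta^N)$ ``from the same fourth-moment envelope'', but the hypothesis $\expectation{\sup_t\norm{\tilde{F}(t,x(t))\tilde{F}(t,x(t))'}_1^2}<\infty$ controls fourth moments of $\tilde{F}$ only \emph{along the true trajectory}, whereas $\xi^N_j$ is evaluated at the random point $\hat{x}^N_{j-1}$. By Lipschitz continuity, $\norm{\xi^N_j}\le\norm{m^N_j}+2K_2\norm{\hat{x}^N_{j-1}-x^N_{j-1}}$, so uniform boundedness of $\expectation{\norm{\xi^N_j}^4}$ requires a uniform \emph{fourth}-moment bound on $\hat{x}^N_{j-1}-x^N_{j-1}$; Theorem~\ref{L2convergence} is an $L^2$ statement and provides no such bound, and nothing in your sketch supplies one. (Your conditional-covariance step is less affected: there second moments suffice once $V(s,y)$ is shown locally Lipschitz in $y$ near the trajectory, which the envelope plus the Lipschitz property of $\tilde{F}$ give.) Two standard repairs: (i) split $\xi^N_j=m^N_j+(\xi^N_j-m^N_j)$, verify the Lindeberg/Lyapunov condition only for the $m^N_j$, where the envelope applies — this is the content of the paper's estimate \eqref{preNormal1} — and show that the summed $(\xi^N_j-m^N_j)$-contribution is $o_P(1)$ after the $(\Delta^N)^{-1/2}$ scaling, which is precisely the paper's $R^{N,k}_1$ bound \eqref{Rem1}; or (ii) prove an $L^4$ analogue of Theorem~\ref{L2convergence} via Doob's $L^4$ inequality together with a Rosenthal/Burkholder-type moment bound, which the stated hypotheses permit but which you would have to carry out. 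With either repair your argument goes through and yields the same limit $\cN(0,\Sigma(t))$.
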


\begin{proof}
We write $\norm{\,\cdot\,}$ throughout the proof for the $2$-norm; since any two norms on the Euclidean space are equivalent, bounds and estimates w.r.t. the 1-norm can be multiplied with the corresponding equivalence constant and are thus valid w.r.t. the 2-norm, too. For a matrix $A$, denote by $\norm{A} := \sup_{x: \norm{x} \le 1} \norm{Ax}$ the spectral norm of $A$. We use the abbreviations
\[
	Z^N:= \left(\Delta^N \right)^{-\frac{1}{2}} \left(\hat{x}^N - x \right) \text{ as well as } x^N_k := x\left(t^N_k \right) \text{ and } \hat{x}^N_k := \hat{x}^N \left(t^N_k \right)
	\]
for simplicity. Choose $t\in [0,T]$ arbitrary but fix, w.l.o.g. $t \in \left[t^N_k,t^N_{k+1} \right)$, if we add the virtual point $t^N_{2^N+1}$ in case that $t=T$. Then
\begin{align*}
	\hat{x}^N(t) - x(t) &= \hat{x}^N_k - x^N_k + \left(x^N_k - x(t) \right) = \hat{x}^N_{k-1} - x^N_{k-1} + \Delta^N \tilde{F}^N_k \left(t^N_{k-1},\hat{x}^N_{k-1} \right) - \int_{t^N_{k-1}}^{t^N_k} F(s,x(s)) \intd{s} - \int_{t^N_k}^t F(s,x(s)) \intd{s}\\
	&= \left[ I + \Delta^N \nabla_x F\left(t^N_{k-1},x^N_{k-1}\right) \right] \left( \hat{x}^N_{k-1} - x^N_{k-1} \right) + \Delta^N \left[ \tilde{F}^N_k\left(t^N_{k-1},x^N_{k-1}\right) - F\left(t^N_{k-1},x^N_{k-1} \right) \right] \\
	&\quad + \Delta^N \left[ \tilde{F}^N_k \left(t^N_{k-1}, \hat{x}^N_{k-1}\right) - F\left(t^N_{k-1}, \hat{x}^N_{k-1}\right) - \left( \tilde{F}^N_{k} \left(t^N_{k-1},x^N_{k-1}\right) - F\left(t^N_{k-1},x^N_{k-1}\right)		\right)			\right] \\
	&\quad + \Delta^N \left[ F\left(t^N_{k-1}, \hat{x}^N_{k-1}\right) - F\left(t^N_{k-1},x^N_{k-1}\right) - \nabla_x F\left(t^N_{k-1},x^N_{k-1}\right) \left(\hat{x}^N_{k-1} - x^N_{k-1} \right) \right] \\
	&\quad + \int_{t^N_{k-1}}^{t^N_k} F\left(t^N_{k-1},x^N_{k-1}\right) - F(s, x(s)) \intd{s}	- \int_{t^N_k}^t F(s,x(s)) \intd{s}
\end{align*}
We make the following definitions
\begin{align*}
	&m^N_k := \tilde{F}^N_k \left(t^N_{k-1},x^N_{k-1}\right) - F\left(t^N_{k-1},x^N_{k-1}\right), \\
	&R^{N,k}_1 :=  \Delta^N \left[ \tilde{F}^N_k \left(t^N_{k-1}, \hat{x}^N_{k-1}\right)  -  \tilde{F}^N_{k} \left(t^N_{k-1},x^N_{k-1}\right) - \left( F\left(t^N_{k-1}, \hat{x}^N_{k-1} \right) - F\left(t^N_{k-1}, x^N_{k-1} \right)			\right) \right], \\
	&R^{N,k}_2 := \Delta^N \left[ \left( F\left(t^N_{k-1}, \hat{x}^N_{k-1} \right) - F\left(t^N_{k-1}, x^N_{k-1} \right)			\right)  -  \nabla_x F\left(t^N_{k-1},x^N_{k-1}\right) \left(\hat{x}^N_{k-1} - x^N_{k-1} \right) \right], \\
	&R^{N,k}_3 := \int_{t^N_{k-1}}^{t^N_k} F\left(t^N_{k-1},x^N_{k-1}\right) - F(s, x(s)) \intd{s} \text{ and } R^{N}_4 := - \int_{t^N_k}^t F(s,x(s)) \intd{s}.
\end{align*}
Set $R^{N,k} :=R^{N,k}_1 + R^{N,k}_2 + R^{N,k}_3$; note that $|R^N_4| \le C \Delta^N$, for a constant $0<C<\infty$. Thus, we get	$Z^N(t) = Z^N ( t^N_k ) + (\Delta^N)^{-\frac{1}{2}} R^N_4$ and at the partitioning points, we face the following structure
\begin{align*}
	Z^N \left(t^N_k \right) = \left[ I + \Delta^N \nabla_x F\left(t^N_{k-1}, x^N_{k-1} \right) \right] Z^N\left(t^N_{k-1}\right) + \left( \Delta^N \right)^{\frac{1}{2}} m^N_k + \left(\Delta^{N}\right)^{-\frac{1}{2}} R^{N,k} \text{ for } 0< t^N_k \le T \text{ and } Z^N(0) = 0.
\end{align*}
Consequently, successive iteration yields
\begin{align}\label{convNormal0}
	Z^N(t) &= \sum_{k: t^N_0 < t^N_k \le t} \left\{ \prod_{j: t^N_k < t^N_j \le t} \left(I + \Delta^N_j \nabla_x F\left(t^N_{j-1}, x^N_{j-1} \right) \right) \right\} \left\{ \left(\Delta^N \right)^{\frac{1}{2}} m^N_k  + \left(\Delta^N \right)^{-\frac{1}{2}} R^{N,k} \right\} + R^N_4. 
\end{align}
In the sequel, we prove that the sum which involves the $m^N_k$ tends to the desired normal distribution, whereas the sum involving the remainder $R^{N,k}$ tends to zero in probability. Hence, $Z^N(t)$ is asymptotically normally distributed with the same parameters. Consider the first sum, we use the definitions
\[
		U_{N,k} :=\prod_{j: t^N_k < t^N_j \le t} \left\{I + \Delta^N_j \nabla_x F\left(t^N_{j-1}, x^N_{j-1} \right) \right\} \left(\Delta^N \right)^{\frac{1}{2}} m^N_k \text{ for } 0< t^N_k \le t
\]
and $U_N := \sum_{ 0< t^N_k \le t} U_{N,k}$. Note that for $N\in\N_+$ the random variables $U_{N,1}, \ldots, U_{N,2^N}$ are independent. W.l.o.g., assume that $\pspace$ is endowed with independent normal distributions $Y_{N,k}$ such that $Y_{N,k} \sim \mathcal{N} \left(0, \text{Cov}\left[ U_{N,k}, U_{N,k} \right]\right)$ for $k=1,\ldots,2^N$. Set $Y_N := \sum_{0<t^N_k\le t} Y_{N,k}$. We prove that the difference of the characteristic functions $\phi_{U_N} - \phi_{Y_N}$ convergences pointwise to zero: For a fix $\alpha\in\R^d$, we show that $\phi_{U_N} (\alpha) - \phi_{Y_N}(\alpha) \rightarrow 0$ as $N \rightarrow \infty$. Therefore, we use the fundamental inequality
\begin{align}\label{condCharFun1}
	\norm{ \phi_{U_N}(\alpha) - \phi_{Y_N}(\alpha) }_{\mathbb{C}} &\le \sum_{k: 0<t^N_k\le t} \norm{ \phi_{U_{N,k}} (\alpha) - \phi_{Y_{N,k}}(\alpha) }_{\mathbb{C}}.
	\end{align}
An application of Lemma~\ref{remainderCharacteristicFunctions} yields 
	\begin{align} \begin{split} \label{preNormal0}
	\eqref{condCharFun1} &\le 2 \norm{\alpha}^2 \sum_{k: 0<t^N_k\le t} \expectation{ \norm{U_{N,k} }^2 1_{ \{ \norm{\alpha}\,\norm{U_{N,k} } > \epsilon/2 \} } } + \epsilon  \norm{\alpha}^2  \sum_{k: 0<t^N_k\le t} \expectation{ \norm{U_{N,k} }^2 } \\
	&\quad +  2 \norm{\alpha}^2 \sum_{k: 0<t^N_k\le t} \expectation{ \norm{Y_{N,k} }^2 1_{ \{ \norm{\alpha}\,\norm{Y_{N,k} } > \epsilon/2 \} } } + \epsilon \norm{\alpha}^2  \sum_{k: 0<t^N_k\le t} \expectation{ \norm{Y_{N,k} }^2 }
\end{split}\end{align}
We show that the first and the third sum of \eqref{remainderCharacteristicFunctions} converge to zero as $N \rightarrow \infty$ for any $\epsilon >0$. This implies that the second and the fourth sum are bounded, and, when multiplied by $\epsilon$, become small, too. We intend to bound $\norm{U_{N,k}}^2$, it is
\begin{align}
		\norm{U_{N,k}} \le \left(\Delta^N\right)^{\frac{1}{2}} \prod_{j: t^N_k < t^N_j \le t} \left(1+\Delta^N \norm{ \nabla_x F(\,\cdot\,,x(\,\cdot\,))}_{\infty} \right) \norm{m^N_k} \le  \left(\Delta^N\right)^{\frac{1}{2}} \exp\left( T  \norm{ \nabla_x F(\,\cdot\,,x(\,\cdot\,))}_{\infty} \right) \norm{m^N_k}. \label{normUnk}
\end{align}
We consider the first sum of \eqref{preNormal0}: Using \eqref{normUnk}, we arrive at
\begin{align}
	 \sum_{k: 0<t^N_k \le t} \expectation{ \norm{U_{N,k}}^2 1_{\left\{ \norm{U_{N,k}} \ge \epsilon \right\} } } 	&=\exp\Big\{ 2 T  \norm{ \nabla_x F(\,\cdot\,,x(\,\cdot\,))}_{\infty} 	\Big\} \, \Delta^N \sum_{k: 0<t^N_k \le t} \expectation{ \norm{m^N_k}^2 1_{\left\{ \norm{m^N_k} \ge  \exp\left(- T \norm{ \nabla_x F(\,\cdot\,,x(\,\cdot\,))}_{\infty} \right) \epsilon \left(\Delta^N\right)^{-\frac{1}{2} }		\right\} } }  \nonumber \\
	&\le c^2 T \expectation{ \sup_{t\in[0,T]} \norm{ \tilde{F}(t,x(t))-F(t,x(t))}^2 1_{ \left\{ \sup_{t\in[0,T]} \norm{ \tilde{F}(t,x(t))-F(t,x(t))} \ge c^{-1} \epsilon \left( \Delta^N\right)^{-\frac{1}{2}} \right\} } }, \label{preNormal1}
\end{align}
where	$c := \exp\left( T \norm{ \nabla_x F(\,\cdot\,,x(\,\cdot\,))}_{\infty} \right) $. An application of Lebesgue's dominated convergence theorem yields that \eqref{preNormal1} converges to zero as $N$ converges to infinity. We obtain for the third sum in equation \eqref{preNormal0}
\begin{align}
	\sum_{k: 0 < t^N_k \le t} \expectation{ \scalar{\alpha}{Y_{N,k}}^2 1_{\left\{  \left| \scalar{\alpha}{Y_{N,k}} \right| \ge \epsilon \right\}}} \le \norm{\alpha}^2 \sum_{k: 0 < t^N_k \le t} \sum_{i=1}^{d} \expectation{ \left|Y_{N,k}^{(i)}\right|^4}^{\frac{1}{2}} \, \p\left(  \left| \scalar{\alpha}{Y_{N,k}} \right| \ge \epsilon 		\right)^{\frac{1}{2}}. \label{preNormal2}
\end{align}
Since, the $d$ elements of the vector $Y_{N,k}$ are normally distributed, we achieve with the notation $\Sigma^{N,k}$ for the covariance matrix $\text{Cov}( U_{N,k}U_{N,k}' )$ that $\sum_{i=1}^d \expectation{ \left|Y_{N,k}^{(i)} \right|^4}^{\frac{1}{2}} = \sum_{i=1}^d \sqrt{3} \Sigma^{N,k}_{i,i} = \sqrt{ 3} \expectation{ \norm{U_{N,k}}^2} \le const\, \Delta^N$, with the help of equation \eqref{normUnk}. In addition, since $\scalar{\alpha}{Y_{N,k}} \sim \mathcal{N} \left(0, \scalar{\alpha}{\Sigma^{N,k} \alpha} \right)$, we get
\[
		\p\left(  \left| \scalar{\alpha}{Y_{N,k}} \right| \ge \epsilon 		\right)^{\frac{1}{2}} \le \sqrt{2} \Phi\left( - \epsilon \scalar{\alpha}{\Sigma^{N,k} \alpha}^{-\frac{1}{2} } \right)^{\frac{1}{2}} \le  \exp\left( - \epsilon^2/4 \norm{\alpha}^{-2} \norm{\Sigma^{N,k}}^{-1} \right),
\]
with the help of a bound given in \cite{chiani2003new}. And $\norm{\Sigma^{N,k}} \le \expectation{ \norm{U_{N,k}}^2 } \le const\, \Delta^N$ from equation \eqref{normUnk}. This proves that \eqref{preNormal2} converges to zero as $N$ tends to infinity. Consequently, $\phi_{U_N} (\alpha) - \phi_{Y_N}(\alpha) \rightarrow 0$, for any $\alpha \in \R^d$.\\
Clearly $Y_N \sim \mathcal{N} ( 0, \Sigma^N )$, where $\Sigma^N = \text{Cov}(U_N, U_N)$ and can be written as 
\begin{align}
	\Sigma^N &= \sum_{k: 0 < t^N_k \le t} \left( \prod_{j: t^N_k < t^N_j \le t} \left(I + \Delta^N_j \nabla_x F\left(t^N_{j-1}, x^N_{j-1}\right) \right) \right) \expectation{ m^N_k \left(m^N_k \right)' } \left(\prod_{j: t^N_k < t^N_j \le t} \left(I + \Delta^N_j \nabla_x F\left(t^N_{j-1}, x^N_{j-1} \right) \right) \right)' \left( t^N_k - t^N_{k-1} \right) \nonumber \\
	&= \sum_{k: 0 < t^N_k \le t} P^N\left( t^N_k, t \right) \expectation{ \left(\tilde{F} \left(t^N_k, x^N_k \right) - F \left(t^N_k, x^N_k \right) \right) \cdot \left(\tilde{F} \left(t^N_k, x^N_k \right) - F \left(t^N_k, x^N_k \right) \right) ' } P^N\left( t^N_k, t \right)'\left( t^N_k - t^N_{k-1} \right) , \label{preNormal3}
\end{align}
with the notation $P^N\left( s, t \right) = \prod_{j: s < t^N_j \le t} \left(I + \Delta^N_j \nabla_x F\left(t^N_{j-1}, x^N_{j-1} \right) \right)$. Due to the continuity of $t\mapsto \nabla_x F(t,x(t))$, we get with the help of Lemma~\ref{matrixProductConv} that $P^N$ converges uniformly on $[0,T]$ to a continuous matrix valued function $P$. An application of Lebesgue's dominated convergence theorem yields that the map which is defined from the factor in the middle of \eqref{preNormal3} as
\[
		\Gamma: [0,T] \rightarrow \R^{d\times d}: \quad t \mapsto \expectation{ \left(\tilde{F}(t,x(t)) - (t,x(t)) \right) \cdot \left(\tilde{F}(t,x(t)) - (t,x(t)) \right)' }
\]
is continuous, thus, $\Sigma^N$ converges uniformly on $[0,T]$ to $\int_0^t P(s,t)  \, \Gamma(s) \,   P(s,t)' \intd{s}$.\\
All in all, $Y_N$ converges to $\cN\left(0, \int_0^t P(s,t) \Gamma(s) P(s,t)'\intd{s} \right)$ in law. It remains to prove that the summed error terms in \eqref{convNormal0} converge to zero in probability. We start with the first error term. Note that due to the independence, we have for all $j \neq k$ that $\expectation{ \scalar{P^N\left(t^N_j,t\right) R^{N,j}_1} {P^N\left(t^N_k,t \right) R^{N,k}_1} } = 0$. Hence, we obtain
\begin{align}
		&\expectation{ \norm{ \left(\Delta^N\right)^{-\frac{1}{2}} \sum_{k: 0 < t^N_k \le t} \prod_{j: t^N_k < t^N_j \le t} \left(I + \Delta^N_j \nabla_x F\left(t^N_j, x^N_j \right) \right) R^{N,k}_1 }^2 } \nonumber \\
		&\le \Delta^N  \exp\left( 2 T \norm{ \nabla_x F(\,\cdot\,,x(\,\cdot\,))}_{\infty} \right) \sum_{k=1}^{2^N} \expectation{ \norm{ \left( \tilde{F}^N_k \left(t^N_k, \hat{x}^N_k \right)  - \tilde{F}^N_k \left(t^N_k, x^N_k\right) \right) -  \left( F^N_k \left(t^N_k, \hat{x}^N_k \right) - F^N_k \left(t^N_k, x^N_k \right) \right) }^2 } \nonumber \\
		&\le const\, \Delta^N  \sum_{k=1}^{2^N} \expectation{ \sup_{t\in[0,T]} \norm{ \hat{x}^N(t) -x(t) }^2 } \rightarrow 0 \text{ as } N \rightarrow \infty. \label{Rem1}
\end{align}
By \eqref{Rem1} the summed first error terms converge to zero in probability. The sum involving the error terms $R^{N,k}_3$ is deterministic and converges to zero: We have $\left( \Delta^N \right)^{- \frac{1}{2}} \sum_{k: 0< t^N_k \le t} \int_{t^N_{k-1}} ^{t^N_k} \norm{ F\left(t^N_{k-1},x^N_{k-1}\right) - F(s,x(s)) } \intd{s} \le const\, \left( \Delta^N \right)^{\frac{1}{2}}$. Finally, consider the sum involving the second error terms $R^{N,k}_2$:
\begin{align}\begin{split}
		\sum_{k: 0 < t^N_k \le t}  \prod_{j: t^N_k < t^N_j \le t} \left(I+\Delta^N \nabla_x F\left(t^N_{j-1}, x^N_{j-1} \right) \right) \left(\Delta^N\right)^{\frac{1}{2}} \left\{  \left( F \left(t^N_{k-1}, \hat{x}^N_{k-1} \right) - F\left(t^N_{k-1}, x^N_{k-1} \right)			\right)  -  \nabla_x F\left(t^N_{k-1}, x^N_{k-1}\right) \left(\hat{x}^N_{k-1} - x^N_{k-1} \right) \right\}. \label{Rem2} 
	\end{split}\end{align}
We apply the mean value theorem to each component $F_i$ of $F$ and get for suitable $\xi^{N,k-1}_i$ between $\hat{x}^N_{k-1}$ and $x^N_{k-1}$ for the norm of equation \eqref{Rem2} the bound
\begin{align}
	 &\le \left( \Delta^N \right)^{\frac{1}{2}} \exp\left(T \norm{ \nabla_x F(\,\cdot\,,x(\,\cdot\,))}_{\infty} \right) \, \sup_{t \in [0,T]} \norm{ \hat{x}^N(t) - x(t)} \sum_{ 0 < t^N_k \le t} \sqrt{ \sum_{i=1}^d \norm{ \nabla_x F_i \left(t^N_{k-1}, \xi^{N,k-1}_i \right) - \nabla_x F_i \left(t^N_{k-1}, x^N_{k-1} \right) }^2 } \nonumber \\
	&\le \left( \Delta^N \right)^{\frac{1}{2}} \exp\left(T \norm{ \nabla_x F(\,\cdot\,,x(\,\cdot\,))}_{\infty} \right) \, \sup_{t \in [0,T]} \norm{ \hat{x}^N(t) - x(t)} \sum_{ 0 < t^N_k \le t} \sum_{i=1}^d \norm{ \nabla_x F_i \left(t^N_{k-1}, \xi^{N,k-1}_i \right) - \nabla_x F_i \left(t^N_{k-1}, x^N_{k-1} \right) } \label{boundNablaF}
\end{align}
Next, define the sets $A^{\rho} := \{y\in\R^d: \norm{y-x(t)} \le \rho \text{ for some } t\in [0,T] \}$. Then the functions $\nabla_x F_i$ are uniformly continuous on $[0,T]\times A^1$. Hence, for every $\epsilon > 0$ there is a $\delta >0$ such that for all $(t,x), (s,y) \in [0,T]\times A^1$ with $\norm{ (t,x) - (s,y)} < \delta$, we have $\max_{i=1,\ldots,d} \norm{ \nabla_x F_i(t,x) - \nabla_x F_i(s,y)} < \epsilon$. Furthermore, due to the Lipschitz-continuity of $F$ all gradients $\nabla_x F_i$ are bounded. Consequently, we obtain for the terms in the sum of equation \eqref{boundNablaF} that
\[ \norm{ \nabla_x F_i \left(t^N_{k-1}, \xi^{N,k-1}_i \right) - \nabla_x F_i \left(t^N_{k-1}, x^N_{k-1} \right) } \le K_4 1_{ \left\{ \sup \left\{ \norm{\hat{x}^N(t) - x(t)}\,: t\in [0,T]  \right\}  > \delta \right\} } + \epsilon 1_{ \left\{ \sup \left\{ \norm{\hat{x}^N(t) - x(t)}\,: t\in [0,T]  \right\}  \le \delta \right\} } \]
for a suitable constant $0 < K_4 < \infty$. This proves that the expectation of \eqref{boundNablaF} can be made arbitrarily small depending on the choice of $\epsilon > 0$ as $N$ converges to infinity.
Thus, the summed third error converges to zero in distribution. Hence, the overall error converges in distribution to the constant zero. Theorem 2.7 of \cite{van1998asymptotic} states that for random variables $X_n$ and $Y_n$ such that $X_n \Rightarrow X$ and $Y_n \Rightarrow c$ where $c$ is a constant, we have $(X_n, Y_n) \Rightarrow (X,c)$. This yields the desired asymptotic normality.
\end{proof}

\appendix
\section{Deferred Proofs and Background Material}\label{Appendix}

\begin{lemma}[Discrete Gronwall Inequality]\label{discreteGronwallInequality}
Let $\{f_k: k\in\N \}, \{g_k: k\in\N \}, \{y_k: k\in\N\}$ be positive sequences in $\Rplus$ which fulfill $y_n \le f_n + \sum_{k=0}^{n-1} g_k y_k$ for every $n\in \N$. Then, we have $y_n  \le f_n + \sum_{k=0}^{n-1} f_k g_k \prod_{j=k+1}^{n-1} (1+g_j) \le f_n + \sum_{k=0}^{n-1} f_k g_k \exp\left( \sum_{j=k+1}^{n-1} g_j \right)$ for each $n\in\N$. The first inequality is actually sharp.
\end{lemma}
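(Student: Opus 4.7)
The plan is a straightforward induction on $n$, with one small reformulation that converts the tangled double-sum bound into a simple linear recursion. The main (and only) obstacle is clean book-keeping of the indices on the finite product $\prod_{j=k+1}^{n-1}(1+g_j)$ when it gets shifted by one in the inductive step; everything else is routine.

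First I would introduce the partial sum $S_n := \sum_{k=0}^{n-1} g_k y_k$ (so $S_0 = 0$), rewriting the standing hypothesis as $y_n \le f_n + S_n$. Using this inside the telescoping identity $S_{n+1} = S_n + g_n y_n$ gives, for every $n$,
$$S_{n+1} \;=\; S_n + g_n y_n \;\le\; S_n + g_n(f_n+S_n) \;=\; (1+g_n) S_n + g_n f_n,$$
a one-step linear recursive majorisation. Letting $T_n$ be the sequence defined by $T_0 = 0$ and $T_{n+1} = (1+g_n)T_n + g_n f_n$ (now with equality), a one-line induction yields $S_n \le T_n$ for all $n \in \N$.

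Next I would verify the closed form $T_n = \sum_{k=0}^{n-1} f_k g_k \prod_{j=k+1}^{n-1}(1+g_j)$ by induction on $n$: multiplying the right-hand side by $(1+g_n)$ shifts the upper product-limit from $n-1$ to $n$, and appending the new term $f_n g_n$ (the $k=n$ summand, with empty product $\prod_{j=n+1}^{n}(1+g_j)=1$) reproduces the closed form at $n+1$. Combining $y_n \le f_n + S_n \le f_n + T_n$ gives the first stated inequality. The second inequality follows by applying the elementary bound $1+g_j \le \exp(g_j)$ factor-wise and converting the product into the exponential of a sum. Finally, sharpness of the first inequality is essentially tautological: if the defining relation holds with equality, then the majorisation $S_{n+1} \le (1+g_n)S_n + g_n f_n$ becomes an equality, so $S_n = T_n$ and the bound is attained. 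I would double-check the boundary case $n=0$ explicitly (empty sums equal $0$, empty products equal $1$, giving the trivial $y_0 \le f_0$) before writing up the induction.
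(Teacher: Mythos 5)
Your proof is correct and complete. Note that the paper states Lemma~\ref{discreteGronwallInequality} in the appendix without any proof at all, so there is nothing to compare against; your argument fills that gap. The reduction to the linear recursion $S_{n+1} \le (1+g_n)S_n + g_n f_n$ via the partial sums $S_n = \sum_{k=0}^{n-1} g_k y_k$, the comparison sequence $T_n$ with its closed form $T_n = \sum_{k=0}^{n-1} f_k g_k \prod_{j=k+1}^{n-1}(1+g_j)$, the factor-wise bound $1+g_j \le e^{g_j}$, and the observation that equality throughout the hypothesis forces $S_n = T_n$ (which is exactly what the sharpness claim means) are all standard and correctly executed. The only hypothesis you implicitly use is nonnegativity of the $g_k$ (to preserve the inequality when multiplying by $1+g_n$ and when replacing $y_n$ by its upper bound inside $g_n y_n$), and that is guaranteed by the assumption that the sequences lie in $\Rplus$; it would be worth one sentence in the write-up to flag where positivity enters.
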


\begin{lemma}[Estimates for characteristic functions]\label{remainderCharacteristicFunctions}
Let $X$ be a $d$-dimensional real random variable on $\pspace$ and let $\mathcal{G} \subseteq \cA$ be a sub-$\sigma$-algebra of $\cA$. Define $\mu = \expectation{X \,|\, \cG}$ and $\Sigma := \expectation{ X X'\,|\, \cG}$. Then for the conditional characteristic function of $X$ w.r.t. $\cG$, $\phi_X|_{\mathcal{G}}$, it holds that for each $t\in \R^d$
\begin{align*}
	\phi_X|_{\mathcal{G}} (t) = \expectation{ e^{i\langle t, X \rangle } \,\big|\, \cG }= \left(1 - \frac{1}{2} \langle t, \Sigma t \rangle  \right) e^{i \scalar{t}{\mu} }+ r(t),
\end{align*}
where the remainder can be bounded as follows
\[
	|r(t)| \le 2 \norm{t}^2 \expectation{ \norm{X}^2 1_{ \{ \norm{t}\,\norm{X} > \epsilon/2 \} } \,|\, \cG } + \epsilon \norm{t}^2  \expectation{ \norm{X}^2 1_{ \{ \norm{t}\,\norm{X} \le \epsilon \} } \,|\, \cG }.
\]
\end{lemma}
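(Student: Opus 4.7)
The plan is a Taylor expansion of the complex exponential, combined with a standard small/large splitting of the remainder. First, I would factor
\[
 e^{i\scalar{t}{X}} = e^{i\scalar{t}{\mu}}\, e^{i\scalar{t}{X-\mu}}
\]
and apply the pointwise identity $e^{iy} = 1 + iy - \tfrac{1}{2}y^2 + \rho(y)$ to the second factor with $y=\scalar{t}{X-\mu}$. The two elementary bounds
\[
 |\rho(y)| \le \min\bigl(|y|^3/6,\, |y|^2\bigr)
\]
follow respectively from Taylor's theorem with integral remainder and from the triangle inequality applied to $|e^{iy}|\le 1$. Conditioning on $\cG$ annihilates the linear term (since $\expectation{X-\mu|\cG}=0$), and the quadratic term becomes $-\tfrac{1}{2}\scalar{t}{\Sigma t}$ after absorbing the $\mu\mu'$-gap between the conditional second moment and the conditional covariance into $r(t)$. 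This produces the advertised product form $(1-\tfrac{1}{2}\scalar{t}{\Sigma t})\,e^{i\scalar{t}{\mu}} + r(t)$.

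For the remainder bound itself, I would split the conditional expectation of $|\rho(\scalar{t}{X-\mu})|$ over the event $\{\|t\|\,\|X\| > \epsilon/2\}$ and its complement. On the tail, apply the crude bound $|\rho(y)| \le y^2 \le \|t\|^2 \|X\|^2$, which reproduces the first summand; the prefactor $2$ absorbs the triangle-inequality contributions coming from the $\mu\mu'$ correction and from the unit-modulus factor $e^{i\scalar{t}{\mu}}$. On the body $\{\|t\|\,\|X\| \le \epsilon\}$, use the sharper bound $|\rho(y)| \le |y|^3/6 \le \tfrac{\epsilon}{6}\,\|t\|^2 \|X\|^2$, which reproduces the second summand with the constant $1/6$ absorbed into the coefficient $\epsilon$. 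The slight overlap region $\{\epsilon/2 < \|t\|\,\|X\| \le \epsilon\}$ is double-counted but harmless, because both pointwise estimates majorize $|\rho|$ everywhere.

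The only point requiring genuine care — really just bookkeeping — is the reconciliation between the centered Taylor expansion, which naturally produces $-\tfrac{1}{2}\scalar{t}{(\Sigma-\mu\mu')t}$, and the stated form which uses $\Sigma$ itself; the discrepancy $\tfrac{1}{2}\scalar{t}{\mu\mu' t}$ is quadratic in $t$ and of the same order as the true remainder, so it is folded into $r(t)$ without affecting the two-term structure of the bound. Beyond this, the argument reduces to conditional Jensen together with the two elementary pointwise bounds on $\rho$, so I do not foresee any substantive technical obstacle.
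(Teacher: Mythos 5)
Your argument is sound where it matters and takes a genuinely different, more textbook route than the paper. You expand the complex exponential directly via $e^{iy}=1+iy-\tfrac12 y^2+\rho(y)$ with the classical bound $|\rho(y)|\le\min(|y|^3/6,\,y^2)$ and then split on $\{\norm{t}\,\norm{X}>\epsilon/2\}$ versus its complement. The paper instead decomposes $\phi_{X|\cG}$ into $\expectation{\cos\scalar{t}{X}\,|\,\cG}+i\,\expectation{\sin\scalar{t}{X}\,|\,\cG}$, applies a second-order mean value expansion to each part (producing Lagrange points $\theta_1,\theta_2$), and controls the two trigonometric remainders via $|1-e^{i\alpha}|\le\min(|\alpha|,2)$ and the Lipschitz continuity of $\sin$. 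In the centered case $\mu=0$ — the only case the paper ever invokes — both arguments are complete, and yours even yields a sharper constant on the body term ($\epsilon/6$ in place of $\epsilon$).

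The one genuine gap is your treatment of general $\mu$: the claim that the discrepancy $\tfrac12\scalar{t}{\mu}^2$ "is folded into $r(t)$ without affecting the two-term structure of the bound" is false for small $\epsilon$. Take $X\equiv v$ deterministic with $t$ parallel to $v$ and $\norm{t}\,\norm{v}\le\epsilon/2$: then the remainder forced by the stated expansion has modulus $\tfrac12\norm{t}^2\norm{v}^2$, while the asserted bound equals $\epsilon\norm{t}^2\norm{v}^2$, which is strictly smaller once $\epsilon<1/2$. (A second, related mismatch: after centering, your Taylor variable is $\scalar{t}{X-\mu}$, so the natural truncation events and second moments involve $\norm{X-\mu}$, not $\norm{X}$.) You should not feel singled out — the paper's own closing sentence, "for general $\mu$ apply the above analysis to $X-\mu$," has exactly the same defect, since centering replaces $\Sigma=\expectation{XX'\,|\,\cG}$ by the conditional covariance and $\norm{X}$ by $\norm{X-\mu}$. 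The honest fix is to state the lemma for $\mu=0$ only (or with the covariance and $\norm{X-\mu}$ throughout), which is all the application in Section~4 requires.
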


\begin{proof}
We can decompose the conditional characteristic function in a real and an imaginary function
\[
		\phi_{X|\cG}(t) = \expectation{ \cos \langle t, X \rangle \,|\, \cG} + i\, \expectation{ \sin \langle t, X \rangle \,|\, \cG}
\]
from which we can compute the gradients and the Hessian matrices. We get for the gradient of the $\sin$- and $\cos$-term
\[
		t \mapsto - \expectation{ \sin\langle t, X \rangle \cdot X \,|\, \cG} \text{ and } t \mapsto  \expectation{ \cos\langle t, X \rangle \cdot X \,|\, \cG}.
\]
The Hessian matrices are given by
\[
		t \mapsto - \expectation{ X \cdot \cos\langle t, X \rangle \cdot X' } \text{ and by } t \mapsto - \expectation{ X \cdot \sin\langle t, X \rangle \cdot X' }. 
\]
In the first place let $\expectation{ X \,|\, \cG} = 0$ a.s.[$\p$], then
\begin{align*}
		\expectation{ \cos \langle t, X \rangle \,|\, \cG} &= 1 - \frac{1}{2} \cdot t' \cdot \expectation{ X\cdot X'\,|\, \cG} \cdot t - \frac{1}{2} \cdot t' \cdot \expectation{ X\cdot( \cos \langle \theta_1 t, X\rangle -1 ) \cdot X' \,| \, \cG} \cdot t,\\
		\expectation{ \sin \langle t, X \rangle \,|\, \cG} &=  - \frac{1}{2} \cdot t' \cdot \expectation{ X \cdot \sin \langle \theta_2 t, X\rangle \cdot X' \,| \, \cG} \cdot t,
\end{align*}
for suitable $\theta_1,\theta_2 \in [0,1]$ by the mean value theorem. Hence, $\phi_{X|\cG} (t) = 1 - \frac{1}{2} \langle t, \Sigma t \rangle + r(t)$, where
\begin{align*}
		|r(t) | &= \frac{1}{2} \left| \left\langle t, \left\{ \expectation{ X\cdot \Big( \cos \langle \theta_1 t, X\rangle -1 \Big) \cdot X' \,| \, \cG} +  i \expectation{ X\cdot \sin \langle \theta_2 t, X\rangle \cdot X' \,| \, \cG} \right\} t \right \rangle	\right| \\
		&\le \frac{1}{2} \sup_{\theta_1 \in [0,1]} \left| \left \langle t, \expectation{ X\cdot \Big( \exp( i \langle \theta_1 t, X\rangle ) - 1 \Big) \cdot X' \,| \, \cG} t \right\rangle \right| \\
		&\quad  + \frac{1}{2} \sup_{\theta_1, \theta_2 \in [0,1]} \left | \left \langle t, \expectation{ X\cdot \Big(\sin \langle \theta_2 t, X\rangle -  \sin \langle \theta_1 t, X\rangle \Big) \cdot X' \,| \, \cG}  t \right \rangle	\right|
\end{align*}
Next, we make use of the estimate $|1-e^{i \alpha} | \le \min\{|\alpha|, 2\}$ real $\alpha$ for the first term. For the second term, we use that the real sinus function is Lipschitz continuous with Lipschitz-constant 1, i.e. $|\sin x - \sin y| \le |x-y|$. Hence, we have the following two estimates,
\begin{align*}
		 \left| \left \langle t, \expectation{ X\cdot \Big( \exp( i \langle \theta_1 t, X\rangle ) - 1 \Big) \cdot X' \,| \, \cG} t \right\rangle \right| & \le \norm{t}^2 \expectation{ \norm{X}^2 |\exp( i \scalar{ \theta_1 t}{X}) - 1| \,|\, \cG } \\
		&\le 2 \norm{t}^2 \expectation{ \norm{X}^2 1_{ \{ \norm{t}\,\norm{X} > \epsilon\} } \,|\, \cG } + \epsilon \norm{t}^2  \expectation{ \norm{X}^2 1_{ \{ \norm{t}\,\norm{X} \le \epsilon\} } \,|\, \cG }.
		\end{align*}
		And,
		\begin{align*}
		&\left | \left \langle t, \expectation{ X\cdot \Big(\sin \langle \theta_2 t, X\rangle -  \sin \langle \theta_1 t, X\rangle \Big) \cdot X' \,| \, \cG}  t \right \rangle	\right| \\
		&\qquad\qquad\qquad\qquad \le 2 \norm{t}^2 \expectation{ \norm{X}^2 1_{ \{ \norm{t}\,\norm{X} > \epsilon/2 \} } \,|\, \cG } + \epsilon \norm{t}^2  \expectation{ \norm{X}^2 1_{ \{ \norm{t}\,\norm{X} \le \epsilon/2 \} } \,|\, \cG }. 
\end{align*}
Combining these estimates, the remainder can be bounded as claimed. For general $\mu $, we find that	$\phi_{X|\cG} (t) = \phi_{X-\mu} (t) e^{i \scalar{t}{\mu} }$, hence, we can apply the above analysis once again. This finishes the proof.
\end{proof}

\begin{lemma}\label{matrixProductConv}
Let $\pi^N$, $N\in\N_+$, be the sequence of dyadic partitions of $[0,T]$, $T>0$, such that $0 = \tau^N_0 < \tau^N_1 < ... < \tau^N_{2^N} = T$. Let $\chi$ be a continuous matrix valued mapping from $[0,T]$ to $\R^{d\times d}$. Let $||\cdot||$ be a submultiplicative matrix norm on  $\R^{d\times d}$. Then there is a continuous map
\[
		P: [0,T]^2 \rightarrow \R^{d\times d} \text{ such that } \sup_{ \substack{s,t\in [0,T],\\ s \le t}} \norm{ \prod_{s < t^N_i \le t} \left(I + \Delta^N_i \chi\left( t^N_{i-1} \right) \right) - P(s,t) } \rightarrow 0 \text{ as } N \rightarrow \infty.
\]
\end{lemma}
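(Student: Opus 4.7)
I would identify the uniform limit as the fundamental matrix solution of a linear ODE, and then prove convergence of $P^N$ to it by a consistency-plus-stability argument for Euler's method. For each $s \in [0,T]$, let $P(s,\,\cdot\,):[s,T]\to \R^{d\times d}$ be the unique solution of the linear IVP $\partial_t P(s,t) = \chi(t)\, P(s,t)$ with $P(s,s) = I$. Since $\chi$ is continuous on the compact set $[0,T]$, it is bounded, say by $\norm{\chi}_\infty$, and has a modulus of continuity $\omega_\chi$ satisfying $\omega_\chi(\delta)\to 0$ as $\delta\to 0$. Standard linear ODE theory (for instance via the Peano--Volterra series $P(s,t)=I+\sum_{n\ge 1}\int_s^t\!\int_s^{u_1}\!\cdots\!\int_s^{u_{n-1}}\chi(u_1)\cdots\chi(u_n)\,du_n\cdots du_1$, which converges in any submultiplicative matrix norm) yields existence, joint continuity of $(s,t)\mapsto P(s,t)$ on $\{s\le t\}$, and the bound $\norm{P(s,t)}\le e^{(t-s)\norm{\chi}_\infty}$.

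\textbf{A priori bound and local error.} First I would establish an $N$-uniform bound on $P^N$: by submultiplicativity and $1+x\le e^x$,
\[
\norm{P^N(s,t)} \le \prod_{s<t^N_i\le t}\bigl(1+\Delta^N_i\,\norm{\chi}_\infty\bigr) \le e^{T\norm{\chi}_\infty}.
\]
Next, the exact flow satisfies $P(s,t^N_{k+1}) = P(s,t^N_k) + \int_{t^N_k}^{t^N_{k+1}} \chi(u)P(s,u)\,du$, so the one-step consistency residual
\[
\rho^N_k := \bigl(I+\Delta^N_{k+1}\chi(t^N_k)\bigr)P(s,t^N_k) - P(s,t^N_{k+1}) = -\int_{t^N_k}^{t^N_{k+1}} \bigl(\chi(u)-\chi(t^N_k)\bigr)P(s,u)\,du
\]
obeys $\norm{\rho^N_k}\le \Delta^N_{k+1}\,\omega_\chi(\Delta^N)\, e^{T\norm{\chi}_\infty}$.

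\textbf{Global error on the grid.} Subtracting the discrete recursion $P^N(s,t^N_{k+1})=(I+\Delta^N_{k+1}\chi(t^N_k))P^N(s,t^N_k)$ from the exact one and iterating gives the telescoping identity
\[
P^N(s,t^N_k) - P(s,t^N_k) = \sum_{j=0}^{k-1} \Bigl(\prod_{i=j+1}^{k-1}\bigl(I+\Delta^N_{i+1}\chi(t^N_i)\bigr)\Bigr)\rho^N_j.
\]
Bounding each product factor by $e^{T\norm{\chi}_\infty}$ via Step 1 yields the global estimate
\[
\sup_{0\le s\le t^N_k\le T} \norm{P^N(s,t^N_k) - P(s,t^N_k)} \;\le\; T\, e^{2T\norm{\chi}_\infty}\, \omega_\chi(\Delta^N) \;\longrightarrow\; 0.
\]
To extend from grid points to arbitrary $(s,t)\in [0,T]^2$ with $s\le t$, I note that nudging $s$ or $t$ across a single dyadic point changes $P^N(s,t)$ by a multiplicative factor $I+O(\Delta^N)$, while $P$ itself is jointly continuous; combining these two observations with the uniform bound on $P^N$ and the grid estimate above gives $\sup_{s\le t}\norm{P^N(s,t)-P(s,t)}\to 0$.

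\textbf{Main obstacle.} The delicate points are (i) that $\chi$ is only assumed continuous, so the usual $O(\Delta^N)$ Euler rate degrades to $O(\omega_\chi(\Delta^N))$, which still tends to zero thanks to uniform continuity of $\chi$ on the compact set $[0,T]$; and (ii) that convergence must be uniform in the starting time $s$, not only in $t$. The latter is handled transparently because all stability constants appearing above depend only on $T$ and $\norm{\chi}_\infty$ and not on $s$, and because the continuity of the limit $P$ in both variables lets us absorb the off-grid correction uniformly. The continuity of $P$ itself is automatic from the Peano--Volterra representation.
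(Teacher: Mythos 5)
Your proposal is correct in substance, but it takes a genuinely different route from the paper. The paper never identifies the limit explicitly: it proves that the sequence $(P^N)_{N}$ is uniformly Cauchy by exploiting the nestedness of the dyadic partitions ($\pi^N\subseteq\pi^M$ for $M\ge N$), grouping the fine-partition factors of $P^M(s,t)$ into blocks that match the coarse factors of $P^N(s,t)$, bounding each block discrepancy by $\Delta^N w(\Delta^N,\chi)+(\Delta^N)^2\norm{\chi}_\infty^2\exp(\Delta^N\norm{\chi}_\infty)$, and then establishing the continuity of the limit $P$ by a separate estimate at the end. You instead characterize $P(s,\cdot)$ as the fundamental solution of the linear ODE $\partial_t P(s,t)=\chi(t)P(s,t)$, $P(s,s)=I$, and run the standard consistency-plus-stability argument for the explicit Euler scheme, finishing with an off-grid correction. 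Your route buys an explicit identification of the limit (which makes the covariance $\Sigma(t)$ in the main theorem more interpretable), an explicit rate $O(\omega_\chi(\Delta^N)+\Delta^N)$, and it does not use the nested dyadic structure at all, only $\Delta^N\to 0$, so it is more general; the cost is importing linear ODE theory (the Peano--Baker series), whereas the paper's argument is elementary and self-contained but tied to refining partitions.

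Two small repairs are needed, neither of which affects the conclusion. First, your residual identity is not exact: since $\Delta^N_{k+1}\chi(t^N_k)P(s,t^N_k)\ne\chi(t^N_k)\int_{t^N_k}^{t^N_{k+1}}P(s,u)\,\mathrm{d}u$, the correct residual is $\rho^N_k=\int_{t^N_k}^{t^N_{k+1}}\left\{\chi(t^N_k)\left(P(s,t^N_k)-P(s,u)\right)-\left(\chi(u)-\chi(t^N_k)\right)P(s,u)\right\}\mathrm{d}u$, which adds an $O\left((\Delta^N_{k+1})^2\right)$ term to your per-step bound and leaves the global estimate of order $\omega_\chi(\Delta^N)+\Delta^N$ intact. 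Second, when $s$ is not a grid point the first Euler factor uses $\chi(t^N_{l-1})$ with $t^N_{l-1}<s$ and a full step $\Delta^N_l$, so the grid recursion starts from an $O(\Delta^N)$ initial error rather than zero; this is exactly what your off-grid correction absorbs, but it should be stated. (Both you and the paper implicitly use $\norm{I}=1$, i.e.\ an operator norm; by equivalence of norms on $\R^{d\times d}$ this is harmless.)
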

\begin{proof}
For $0\le s\le t \le T$ write $P^N(s,t) := \prod_{s < t^N_i \le t} \left(I + \Delta^N_i \chi\left( t^N_{i-1} \right) \right)$. We show that the $(P^N)_{N \in \N_+}$ are Cauchy w.r.t. $\norm{\,\cdot\,}_{\infty}$ on $[0,T]$. Fix some $0 \le s \le t \le T$. Let $M \ge N$. Then for a $k \le 2^N$, we have $P^N(s,t) = D_0\cdot D_1\cdot\ldots\cdot D_k $, where each $D_i = I + \Delta^N_i \chi\left( t^N_{i-1} \right)$ for some $t^N_i \in (s,t]$ and $D_0$ is determined by the unique $t^N_l$ which fulfills $t^N_{l-1} \le s < t^N_l \le t$ . Since $\pi^N \subseteq \pi^M$, we can choose for $i=1,\ldots,k$ the factors
\[
	F_i = \left(I + \Delta^M_u \chi\left( t^M_{u-1} \right) \right) \cdot \ldots \cdot \left( I + \Delta^M_{u+v} \, \chi\left( t^M_{u+v-1} \right) \right) \text{ such that for } i \neq l: \; \left(t^N_{i-1}, t^N_i \right] = \left(t^M_{u-1}, t^M_u \right]\cup\ldots\cup \left(t^M_{u+v-1}, t^M_{u+v} \right]
\]
for $v = 2^{M-N}-1$. Furthermore, there is a factor $F_0$ given by
\[
	F_0 =  \left(I + \Delta^M_l \chi\left( t^M_{l-1} \right) \right) \cdot \ldots \cdot \left( I + \Delta^M_{l+\tilde{v} } \, \chi\left( t^M_{l+\tilde{v}-1} \right) \right)
	\]
for the unique $t^M_l$ which fulfills $t^M_{l-1} \le s < t^M_l \le t$ and $\tilde{v} \le 2^{M-N}$. Then $P^M(s,t) = F_0\cdot F_1\cdot\ldots\cdot F_k \cdot \text{Res}$, where the residual factors of $P^M(s,t)$ are collected in $\text{Res}$. Hence, we can write
\begin{align}
		P^M(s,t) - P^N(s,t) &= F_0\cdot F_1\cdot\ldots\cdot F_k \cdot \text{Res} - D_0\cdot D_1\cdot \ldots\cdot D_k \nonumber \\
		\begin{split}
		&= (F_0 - I) \cdot F_1 \cdot \ldots \cdot F_k \cdot \text{Res} +  F_1 \cdot \ldots \cdot F_k \cdot (\text{Res} - I)\\
		&\quad+\big\{ F_1 \cdot\ldots\cdot F_k - D_1 \cdot \ldots\cdot D_k \big\} - (D_0 - I) \cdot D_1\cdot \ldots \cdot D_k.\label{D-M2} \end{split}
\end{align}
Firstly, we have $\norm{\text{Res} } \le \exp \left(\Delta^N \norm{\chi}_{\infty} \right)$, as well as, $\max\big\{ \norm{F_1\cdot,\ldots\cdot F_k},\; \norm{D_1\cdot,\ldots\cdot D_k} \big\} \le \exp \left(T \norm{\chi}_{\infty} \right)$ and secondly,
\[
	\max\big\{ \norm{ F_0 - I},\; \norm{D_0 - I},\; \norm{\text{Res} - I} \big\} \le \Delta^N \norm{\chi}_{\infty} \exp\left( \Delta^N\norm{\chi}_{\infty} \right).
\]
Thirdly, we can write the main term as $F_1\cdot\ldots\cdot F_k  - D_1\cdot \ldots\cdot D_k = \sum_{j=1}^k\; \prod_{i=1}^{j} F_i \prod_{i=j+1}^k D_i - \sum_{j=0}^{k-1}\; \prod_{i=1}^{j} F_i \prod_{i=j+1}^k D_i$, which implies for the norm of this term
\begin{align}\label{D-F}
		\norm{ F_1\cdot\ldots\cdot F_k - D_1\cdot\ldots\cdot D_k } \le \sum_{j=1}^{k} \max_{1\le i \le k} \norm{F_i} ^{j-1} \norm{ F_j - D_j}  \max_{1\le i \le k} \norm{D_i}^{k-j}.
\end{align}
The factors of each summand in \eqref{D-F} can be estimated as follows, we have for the first and the third factor
\[
		\norm{D_i} \le 1+\Delta^N \norm{\chi}_{\infty} \le \exp \left( \Delta^N \norm{\chi}_{\infty} \right) \text{ as well as } \norm{F_i} \le (1+\Delta^M \norm{\chi}_{\infty} ) ^{2^{M-N}} \le \exp\left( \Delta^N \norm{\chi}_{\infty} \right).
\]
For the factor in the middle, we use the definition of the modulus of continuity: Define for $\delta > 0$ the function $w(\delta,\chi) := \sup\{ ||\chi(s) - \chi(t)||: |s-t|\le \delta \}$. Then
\begin{align*}
		\norm{ F_j - D_j} &\le \Delta^N w(\Delta^N, \chi) + \sum_{j=2}^{2^{M-N}} \binom{2^{M-N}}{j} \left( \Delta^M \norm{\chi}_{\infty} \right)^j  \le \Delta^N w(\Delta^N,\chi) + \left( \Delta^N \right)^2 \norm{\chi}_{\infty}^2 \exp \left(\Delta^N \norm{\chi}_{\infty} \right).
\end{align*}
Eventually, we combine these estimates to find that \eqref{D-M2} can be bounded over all $s$ and $t$ by $c\, \left(\Delta^N + w(\Delta^N, \chi) \right)$, for a suitable constant $0<c<\infty$ which does not depend on $N$. Since $\chi$ is uniformly continuous on $[0,T]$, we have $\lim_{N\rightarrow\infty} w(\Delta^N,\chi)=0$. This proves the Cauchy property and consequently the uniform convergence of the sequence $(P^N)_{N\in\N_+}$ to a limit function $P$. It remains to prove the continuity of this $P$. We have for all $(s,t), (s_0,t_0)$ in $[0,T]^2$ that
\begin{align}\begin{split}\label{normP0}
	\norm{ P(s,t) - P(s_0,t_0) } &\le \norm{P(s,t) - P^N(s,t)} + \norm{P^N(s,t) - P^N(s_0,t)} \\
	&\quad + \norm{P^N(s_0,t) - P^N(s_0,t_0)} + \norm{P^N(s_0,t_0) - P(s_0,t_0)}.
\end{split}\end{align}
And we can compute, that for $0\le a < b < c \le T$, we have
\[
		\norm{ \prod_{a < t^N_i \le b} \left( I + \Delta^N \chi\left( t^N_i \right) \right) - \prod_{a < t^N_i \le c} \left( I + \Delta^N \chi\left( t^N_i \right) \right) } \le \norm{\chi}_{\infty} \exp\Big( T\norm{\chi}_{\infty} \Big) \exp\left( (c-b + \Delta^N) \norm{\chi}_{\infty} \right) \left( (c-b) + \Delta^N \right)
\]
where the terms involving the $\Delta^N$ stem from the fact that $P^N$ is discontinuous at the partitioning points. All in all, the remaining terms in equation \eqref{normP0} can be bounded with
\begin{align*}
		&\max\left\{\norm{P^N(s,t) - P^N(s_0,t) },\; \norm{P^N(s_0,t) - P^N(s_0,t_0) } \right\} \le const\, \Big( |\max(|s-s_0|, |t-t_0|) + \Delta^N \Big)
\end{align*}
This yields the desired continuity of the limit function $P$.
\end{proof}

\bibliographystyle{plainnat} 
\bibliography{Bibliography}

\end{document}